\documentclass{conm-p-l}
\usepackage{amssymb}

\usepackage{amsfonts}
\usepackage{eucal}
\usepackage{upgreek}
%
%

\begin{document}

\font\eightrm=cmr8
\font\eightit=cmti8
\font\eighttt=cmtt8
\font\tensans=cmss10
\def\emp{\text{\tensans\O}}
\def\tci
{\hbox{\hskip1.8pt$\rightarrow$\hskip-11.5pt$^{^{C^\infty}}$\hskip-1.3pt}}
\def\nft
{\hbox{$n$\hskip3pt$\equiv$\hskip4pt$5$\hskip4.4pt$($mod\hskip2pt$3)$}}
\def\bbR{\mathrm{I\!R}}
\def\rto{\bbR\hskip-.5pt^2}
\def\rtr{\bbR\hskip-.7pt^3}
\def\rfo{\bbR\hskip-.7pt^4}
\def\rn{\bbR^{\hskip-.6ptn}}
\def\mr{\bbR^{\hskip-.6ptm}}
\def\bbZ{\mathsf{Z\hskip-4ptZ}}
\def\bbRP{\text{\bf R}\text{\rm P}}
\def\bbC{{\mathchoice {\setbox0=\hbox{$\displaystyle\rm C$}\hbox{\hbox
to0pt{\kern0.4\wd0\vrule height0.9\ht0\hss}\box0}}
{\setbox0=\hbox{$\textstyle\rm C$}\hbox{\hbox
to0pt{\kern0.4\wd0\vrule height0.9\ht0\hss}\box0}}
{\setbox0=\hbox{$\scriptstyle\rm C$}\hbox{\hbox
to0pt{\kern0.4\wd0\vrule height0.9\ht0\hss}\box0}}
{\setbox0=\hbox{$\scriptscriptstyle\rm C$}\hbox{\hbox
to0pt{\kern0.4\wd0\vrule height0.9\ht0\hss}\box0}}}}
\def\bbQ{{\mathchoice {\setbox0=\hbox{$\displaystyle\rm Q$}\hbox{\raise
0.15\ht0\hbox to0pt{\kern0.4\wd0\vrule height0.8\ht0\hss}\box0}}
{\setbox0=\hbox{$\textstyle\rm Q$}\hbox{\raise
0.15\ht0\hbox to0pt{\kern0.4\wd0\vrule height0.8\ht0\hss}\box0}}
{\setbox0=\hbox{$\scriptstyle\rm Q$}\hbox{\raise
0.15\ht0\hbox to0pt{\kern0.4\wd0\vrule height0.7\ht0\hss}\box0}}
{\setbox0=\hbox{$\scriptscriptstyle\rm Q$}\hbox{\raise
0.15\ht0\hbox to0pt{\kern0.4\wd0\vrule height0.7\ht0\hss}\box0}}}}
\def\bbQ{{\mathchoice {\setbox0=\hbox{$\displaystyle\rm Q$}\hbox{\raise
0.15\ht0\hbox to0pt{\kern0.4\wd0\vrule height0.8\ht0\hss}\box0}}
{\setbox0=\hbox{$\textstyle\rm Q$}\hbox{\raise
0.15\ht0\hbox to0pt{\kern0.4\wd0\vrule height0.8\ht0\hss}\box0}}
{\setbox0=\hbox{$\scriptstyle\rm Q$}\hbox{\raise
0.15\ht0\hbox to0pt{\kern0.4\wd0\vrule height0.7\ht0\hss}\box0}}
{\setbox0=\hbox{$\scriptscriptstyle\rm Q$}\hbox{\raise
0.15\ht0\hbox to0pt{\kern0.4\wd0\vrule height0.7\ht0\hss}\box0}}}}
\def\dimr{\dim_{\hskip.4pt\bbR\hskip-1.2pt}\w}
\def\dimc{\dim_{\hskip.4pt\bbC\hskip-1.2pt}\w}
\def\aff{\mathrm{A\hn f\hh f}\hs}
\def\cx{C\hskip-2pt_x\w}
\def\cy{C\hskip-2pt_y\w}
\def\cz{C\hskip-2pt_z\w}
\def\hyp{\hskip.5pt\vbox
{\hbox{\vrule width3ptheight0.5ptdepth0pt}\vskip2.2pt}\hskip.5pt}
\def\er{r}
\def\es{s}
\def\df{d\hskip-.8ptf}
\def\dz{\mathcal{D}}
\def\dzp{\dz^\perp}
\def\fv{\mathcal{F}}
\def\gr{\mathcal{G}}
\def\fvp{\fv_{\nrmh p}}
\def\wv{\mathcal{W}}
\def\vt{\mathcal{P}}
\def\tv{\mathcal{T}}
\def\vr{\mathcal{V}}
\def\xs{J}
\def\xl{S}
\def\cs{\mathcal{B}}
\def\zy{\mathcal{Z}}
\def\vtx{\vt_{\nh x}}
\def\fh{f}
\def\g{\mathtt{g}}
\def\rc{\theta}
\def\jm{\mathcal{I}}
\def\ke{\mathcal{K}}
\def\xc{\mathcal{X}_c}
\def\lz{\mathcal{L}}
\def\dla{\mathcal{D}_{\hskip-2ptL}^*}
\def\Lie{\pounds}
\def\lv{\Lie\hskip-1.2pt_v\w}
\def\lo{\lz_0}
\def\xe{\mathcal{E}}
\def\eo{\xe_0}
\def\lsq{\mathsf{[}}
\def\rsq{\mathsf{]}}
\def\hga{\hskip2.3pt\widehat{\hskip-2.3pt\gamma\hskip-2pt}\hskip2pt}
\def\hm{\hskip1.9pt\widehat{\hskip-1.9ptM\hskip-.2pt}\hskip.2pt}
\def\hg{\hskip.9pt\widehat{\hskip-.9pt\g\hskip-.9pt}\hskip.9pt}
\def\hna{\hskip.2pt\widehat{\hskip-.2pt\nabla\hskip-1.6pt}\hskip1.6pt}
\def\hdz{\hskip.9pt\widehat{\hskip-.9pt\dz\hskip-.9pt}\hskip.9pt}
\def\hdp{\hskip.9pt\widehat{\hskip-.9pt\dz\hskip-.9pt}\hskip.9pt^\perp}
\def\hmt{\hskip1.9pt\widehat{\hskip-1.9ptM\hskip-.5pt}_t}
\def\hmz{\hskip1.9pt\widehat{\hskip-1.9ptM\hskip-.5pt}_0}
\def\hmp{\hskip1.9pt\widehat{\hskip-1.9ptM\hskip-.5pt}_p}
\def\hk{\hskip1.5pt\widehat{\hskip-1.5ptK\hskip-.5pt}\hskip.5pt}
\def\hq{\hskip1.5pt\widehat{\hskip-1.5ptQ\hskip-.5pt}\hskip.5pt}
\def\txm{{T\hskip-2.9pt_x\w\hn M}}
\def\tyhm{{T\hskip-3.5pt_y\w\hm}}
\def\q{q}
\def\bq{\hat q}
\def\p{p}
\def\w{^{\phantom i}}
\def\x{u}
\def\y{v}
\def\vp{{\tau\hskip-4.55pt\iota\hskip.6pt}}
\def\evp{{\tau\hskip-3.55pt\iota\hskip.6pt}}
\def\vd{\vt\hh'}
\def\vdx{\vd{}\hskip-4.5pt_x}
\def\bz{b\hh}
\def\fe{F}
\def\fy{\phi}
\def\vl{\Lambda}
\def\hy{\mathcal{V}}
\def\vh{h}
\def\bc{C}
\def\mv{V}
\def\vo{V_{\nnh0}}
\def\ao{A_0}
\def\bo{B_0}
\def\uv{\mathcal{U}}
\def\sv{\mathcal{S}}
\def\svp{\sv_p}
\def\xv{\mathcal{X}}
\def\xvp{\xv_p}
\def\yv{\mathcal{Y}}
\def\yvp{\yv_p}
\def\zv{\mathcal{Z}}
\def\zvp{\zv_p}
\def\cv{\mathcal{C}}
\def\dy{\mathcal{D}}
\def\nv{\mathcal{N}}
\def\iv{\mathcal{I}}
\def\gkp{\Sigma}
\def\ret{\sigma}
\def\taw{\uptau}
\def\hs{\hskip.7pt}
\def\hh{\hskip.4pt}
\def\hn{\hskip-.4pt}
\def\nh{\hskip-.7pt}
\def\nnh{\hskip-1pt}
\def\hrz{^{\hskip.5pt\text{\rm hrz}}}
\def\vrt{^{\hskip.2pt\text{\rm vrt}}}
\def\vt{\varTheta}
\def\mtr{\Theta}
\def\op{\varTheta}
\def\vg{\varGamma}
\def\my{\mu}
\def\ny{\nu}
\def\gy{\lambda}
\def\lp{\lambda}
\def\ax{\alpha}
\def\lf{\widetilde{\lp}}
\def\bx{\beta}
\def\ay{a}
\def\by{b}
\def\gp{\mathrm{G}}
\def\hp{\mathrm{H}}
\def\kp{\mathrm{K}}
\def\gm{\gamma}
\def\Gm{\Gamma}
\def\Lm{\Lambda}
\def\Dt{\Delta}
\def\dg{\Delta}
\def\sj{\sigma}
\def\lg{\langle}
\def\rg{\rangle}
\def\lr{\langle\hh\cdot\hs,\hn\cdot\hh\rangle}
\def\vs{vector space}
\def\rvs{real vector space}
\def\vf{vector field}
\def\tf{tensor field}
\def\tvn{the vertical distribution}
\def\dn{distribution}
\def\pt{point}
\def\tc{tor\-sion\-free connection}
\def\ea{equi\-af\-fine}
\def\rt{Ric\-ci tensor}
\def\pde{partial differential equation}
\def\pf{projectively flat}
\def\pfs{projectively flat surface}
\def\pfc{projectively flat connection}
\def\pftc{projectively flat tor\-sion\-free connection}
\def\su{surface}
\def\sco{simply connected}
\def\psr{pseu\-\hbox{do\hs-}Riem\-ann\-i\-an}
\def\inv{-in\-var\-i\-ant}
\def\trinv{trans\-la\-tion\inv}
\def\feo{dif\-feo\-mor\-phism}
\def\feic{dif\-feo\-mor\-phic}
\def\feicly{dif\-feo\-mor\-phi\-cal\-ly}
\def\Feicly{Dif-feo\-mor\-phi\-cal\-ly}
\def\diml{-di\-men\-sion\-al}
\def\prl{-par\-al\-lel}
\def\skc{skew-sym\-met\-ric}
\def\sky{skew-sym\-me\-try}
\def\Sky{Skew-sym\-me\-try}
\def\dbly{-dif\-fer\-en\-ti\-a\-bly}
\def\cf{con\-for\-mal\-ly flat}
\def\ls{locally symmetric}
\def\ecs{essentially con\-for\-mal\-ly symmetric}
\def\rr{Ric\-ci-re\-cur\-rent}
\def\kf{Kil\=ling field}
\def\om{\omega}
\def\vol{\varOmega}
\def\og{\varOmega\hs}
\def\dv{\delta}
\def\ve{\varepsilon}
\def\zt{\zeta}
\def\kx{\kappa}
\def\mf{manifold}
\def\mfd{-man\-i\-fold}
\def\bmf{base manifold}
\def\bd{bundle}
\def\tbd{tangent bundle}
\def\ctb{cotangent bundle}
\def\bp{bundle projection}
\def\prc{pseu\-\hbox{do\hs-}Riem\-ann\-i\-an metric}
\def\prd{pseu\-\hbox{do\hs-}Riem\-ann\-i\-an manifold}
\def\Prd{pseu\-\hbox{do\hs-}Riem\-ann\-i\-an manifold}
\def\npd{null parallel distribution}
\def\pj{-pro\-ject\-a\-ble}
\def\pd{-pro\-ject\-ed}
\def\lcc{Le\-vi-Ci\-vi\-ta connection}
\def\vb{vector bundle}
\def\vbm{vec\-tor-bun\-dle morphism}
\def\kerd{\text{\rm Ker}\hskip2.7ptd}
\def\ro{\rho}
\def\sy{\sigma}
\def\ts{total space}
\def\pmb{\pi}
\def\pl{\partial}
\def\cro{\overline{\hskip-2pt\pl}}
\def\ddb{\pl\hskip1.7pt\cro}
\def\dbd{\cro\hskip-.3pt\pl}
\newtheorem{theorem}{Theorem}[section] 
\newtheorem{proposition}[theorem]{Proposition} 
\newtheorem{lemma}[theorem]{Lemma} 
\newtheorem{corollary}[theorem]{Corollary} 
  
\theoremstyle{definition} 
  
\newtheorem{defn}[theorem]{Definition} 
\newtheorem{notation}[theorem]{Notation} 
\newtheorem{example}[theorem]{Example} 
\newtheorem{conj}[theorem]{Conjecture} 
\newtheorem{prob}[theorem]{Problem} 
  
\theoremstyle{remark} 
  
\newtheorem{remark}[theorem]{Remark}

\title[Kil\-ling fields on compact pseu\-do\hs-\hn K\"ahler
manifolds]{Kil\-ling fields on compact pseu\-do\hs-\nh K\"ahler manifolds}
\author[A. Derdzinski]{Andrzej Derdzinski} 
\address{Department of Mathematics, The Ohio State University, 
Columbus, OH 43210, USA} 
\email{andrzej@math.ohio-state.edu} 
\author[I.\ Terek]{Ivo Terek} 
\address{Department of Mathematics, The Ohio State University, 
Columbus, OH 43210, USA} 
\email{terekcouto.1@osu.edu} 
\subjclass[2020]{Primary 53C50; Secondary 53C56}
\keywords{Compact pseu\-do-\hn K\"ah\-ler manifold, Kil\-ling vector field}
\def\leftmark{A.\ Derdzinski \&\ I.\ Terek}
\def\rightmark{Kil\-ling fields on pseu\-do\hs-\hn K\"ahler manifolds}

\begin{abstract}
We show that a Kil\-ling field on a compact pseu\-do\hs-\hn K\"ahler ddbar
manifold is necessarily (real) hol\-o\-mor\-phic. Our argument works without
the ddbar assumption in real dimension four. The claim about
hol\-o\-mor\-phic\-i\-ty of Kil\-ling fields on compact
pseu\-do\hs-\hn K\"ahler manifolds appears in a 2012 paper by Ya\-ma\-da, and
in an appendix we provide
a detailed explanation of why we believe that Ya\-ma\-da's argument is
incomplete.
\end{abstract}

\maketitle

\setcounter{section}{0}
\setcounter{theorem}{0}
\renewcommand{\theequation}{\arabic{section}.\arabic{equation}}
\renewcommand{\thetheorem}{\Alph{theorem}}
\section*{Introduction}
\setcounter{equation}{0}
By a pseu\-do\hs-\hn K\"ahler 
manifold we mean a pseu\-do\hs-Riem\-ann\-i\-an manifold 
$\,(M\nh,g)$ endowed with a $\,\nabla\nh$-par\-al\-lel al\-most-com\-plex 
structure $\,J$, for the Le\-vi-Ci\-vi\-ta connection $\,\nabla\hs$ of $\,g$,
such that the operator $\,J\hskip-1.5pt_x\w:\txm\to\txm\,$ is a linear
$\,g_x\w\nh$-isom\-e\-try (or is, equivalently, $\,g_x\w\nh$-skew-ad\-joint) at
every point $\,x\in M\nh$. This implies integrability of $\,J\,$ (see the
comment preceding Lemma 3.1). We then call $\,(M\nh,g)$ a {\it
pseu\-do\hs-\hn K\"ahler\/ $\,\ddb$ manifold\/} if, in addition, the
underlying complex manifold $\,M\,$ has the following $\,\ddb$ {\it property},
also referred to as {\it the\/ $\,\ddb\hs$ lemma\hh}:
\begin{equation}\label{ddb}
\begin{array}{l}
\mathrm{every\ closed\ }\pl\hs\hyp\mathrm{exact\ or\ 
}\,\cro\hs\hyp\mathrm{exact\ }\,(p,q)\,\mathrm{\ form}\\
\mathrm{equals\ }\,\hh\ddb\hh\lambda\,\mathrm{\ for\ some\ 
}\,(p-1,q-1)\,\mathrm{\ form\ }\,\lambda\hh.
\end{array}
\end{equation}
It is well known that the $\,\ddb\,$ property follows if $\,M$ is compact and 
admits a Riemannian K\"ahler metric \cite[Prop.\,6.17 on p.\,144]{voisin}.
\begin{theorem}\label{thrma}Every Kil\-ling vector field on a compact\/
pseu\-do\hs-\hn K\"ahler\/ $\,\ddb$ manifold is real hol\-o\-mor\-phic.
\end{theorem}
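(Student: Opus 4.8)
The plan is to reduce the holomorphicity of the Killing field $v$ to the vanishing of a single pure-type component of an exact $2$-form, and then to eliminate that component by means of the $\partial\bar\partial$ property. I would first set $A=\nabla v$ and observe that, since $J$ is $\nabla$-parallel, substituting $[v,X]=\nabla_vX-\nabla_Xv$ into $(\mathcal{L}_vJ)X=[v,JX]-J[v,X]$ gives at once $\mathcal{L}_vJ=[J,A]=JA-AJ$. Splitting $A=A'+A''$ into its $J$-commuting and $J$-anticommuting parts, one finds $[J,A]=2JA''$, so $v$ is real holomorphic if and only if the complex-antilinear part $A''$ of $\nabla v$ vanishes. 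Everything thus reduces to proving $A''=0$.

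The crux is to show that $B:=\mathcal{L}_vJ$ is $\nabla$-parallel. For this I would combine the second-order identity for a Killing field, $(\nabla_XA)Y=R(v,X)Y$ (Kostant's formula), with the K\"ahler relation $R(W,Z)J=JR(W,Z)$, which holds because $J$ is parallel. Using once more that $J$ is parallel, these yield $\nabla_XB=[J,\nabla_XA]=[J,R(v,X)]=0$ for every $X$, so $B$ is parallel. Hence $A''=-\frac12 JB$ is parallel as well, and so is the $J$-anti-invariant $2$-form $g(A''\,\cdot\,,\cdot)$ together with each of its pure-type components.

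Finally I would recognize the relevant component as an exact form and kill it. Writing $\xi=g(v,\cdot)$ for the metric dual of $v$ and using that $A$ is $g$-skew-adjoint, one gets $d\xi=2\,g(A\,\cdot\,,\cdot)$, whose $J$-invariant summand $2\,g(A'\,\cdot\,,\cdot)$ has type $(1,1)$; hence the $(2,0)$-component of $d\xi$ is that of the anti-invariant summand $2\,g(A''\,\cdot\,,\cdot)$, and so, by the preceding paragraph, it is parallel and in particular $d$-closed. On the other hand this same component equals $\partial\xi^{1,0}$ and is therefore $\partial$-exact. On a compact complex manifold with the $\partial\bar\partial$ property a $d$-closed, $\partial$-exact form of pure type must be $\partial\bar\partial$-exact; but a nonzero $(2,0)$-form cannot lie in the image of $\partial\bar\partial$, whose potentials here would have bidegree $(1,-1)$, so $\partial\xi^{1,0}=0$. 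Conjugation annihilates the $(0,2)$-component as well, whence $A''=0$ and $\mathcal{L}_vJ=0$, i.e. $v$ is holomorphic.

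The main obstacle is that the indefiniteness of $g$ forecloses the classical argument, in which $d\xi$ is harmonic and a harmonic form splits into harmonic pieces of pure type. Here the role of ``harmonic'' is played by ``parallel'', supplied by Kostant's formula --- which, like the commutation $RJ=JR$, is a pointwise consequence of the connection being metric and K\"ahler and so is insensitive to the signature of $g$ --- while the role of the Hodge decomposition is played by the $\partial\bar\partial$ lemma, a statement about the underlying complex manifold alone. The point requiring genuine care is the compatibility of the type decomposition with both $\nabla$ and $d=\partial+\bar\partial$, which is exactly what the parallelism of $J$ guarantees.
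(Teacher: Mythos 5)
Your argument is correct, and its overall strategy coincides with the paper's first proof (Section~\ref{pa}): reduce holomorphicity of $v$ to the vanishing of a $(2,0)$ form that is simultaneously parallel (hence closed) and $\partial$-exact, then invoke the $\ddb$ property; your $\partial\xi^{1,0}$ is, up to a constant factor, exactly the form $\zeta=\partial\xi$ of Lemma~\ref{parxi}. Two of your sub-arguments differ from the paper's, and both differences are worth recording. For parallelism of the Lie derivative of $J$, you use Kostant's second-order Killing identity $\nabla(\nabla v)=R(v,\,\cdot\,)$ together with the commutation $[R(\,\cdot\,,\cdot\,),J]=0$ forced by $\nabla J=0$; the paper (Lemma~\ref{ldpar}) instead uses that the local flow of a Killing field preserves $\nabla$ and hence carries parallel tensors to parallel tensors. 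Both are signature-blind and equally valid; yours is pointwise and curvature-theoretic, the paper's is flow-theoretic and needs no curvature identity. The one step requiring care is your last one: you apply the $\ddb$ lemma directly to a form of bidegree $(2,0)$ and argue that a potential would have bidegree $(1,-1)$, so the form vanishes. This is legitimate under formulations, such as that of Deligne--Griffiths--Morgan--Sullivan, in which the degenerate bidegrees are covered by convention; but, as the paper emphasizes, many expositions state the $\ddb$ lemma only for positive $p$ and $q$, and it is precisely to close this loophole that the paper proves Lemma~\ref{ddbar}, deducing the $(p,0)$ case from the positive case via (\ref{hcl}), the fact that holomorphic forms on compact $\ddb$ manifolds are closed. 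If the version of the $\ddb$ lemma you cite does not explicitly allow $q=0$, you need an argument of that kind; with it, your proof is complete.
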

We provide two proofs of Theorem~\ref{thrma}, in Sections~\ref{pa}
and~\ref{aa}. The former is derived directly from the $\,\ddb\,$ condition;
the latter, shorter, relies on the Hodge decomposition, which is equivalent to
the $\,\ddb\,$ property 
\cite[p.\,269, subsect.\,(5.21)]{deligne-griffiths-morgan-sullivan}. 

The Riem\-ann\-i\-an-\hn K\"ahler case of Theorem~\ref{thrma} is well known, and
straightforward \cite[the lines following Remark 4.83 on pp.\
60--61]{ballmann}. See also Remark~\ref{riemc}.

For pseu\-do\hs-\hn K\"ahler surfaces, our argument yields a stronger conclusion.
\begin{theorem}\label{thrmb}In real dimension four the assertion of
Theorem\/~{\rm\ref{thrma}} holds without the\/ $\,\ddb$ hypothesis.
\end{theorem}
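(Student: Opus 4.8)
The plan is to work with $A=\nabla v$, the $g$-skew-adjoint bundle endomorphism of $TM$ determined by the Killing field $v$, and to prove directly that $\Lie_v J=0$. Since $J$ is $g$-skew-adjoint, the $J$-commuting and $J$-anticommuting parts $A'=\frac12(A-JAJ)$ and $A''=\frac12(A+JAJ)$ are again $g$-skew-adjoint, and a short computation using $\nabla J=0$ and vanishing torsion gives $\Lie_v J=[J,A]=[J,A'']=2JA''$. Thus the entire problem reduces to showing that $A''=0$. I would encode $A''$ as the real $2$-form $\omega''=g(A''\,\cdot\,,\,\cdot\,)$; because $A''$ anticommutes with $J$, this form is of type $(2,0)+(0,2)$, say $\omega''=\theta+\bar\theta$ with $\theta$ a $(2,0)$-form, and the goal becomes $\theta=0$.

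The mechanism driving the proof is that $A''$ is parallel. The second-order Killing identity expresses $\nabla_X A$ as a curvature endomorphism of the form $R(v,X)$, and on any pseudo-K\"ahler manifold $R(v,X)$ commutes with $J$ because $J$ is $\nabla$-parallel; hence $J\,(\nabla_X A)\,J=J^2R(v,X)=-\nabla_X A$, and, $J$ being parallel,
\[
\nabla_X A''=\frac12\bigl(\nabla_X A+J(\nabla_X A)J\bigr)=0 .
\]
This step is dimension-free and uses neither compactness nor the $\ddb$ hypothesis; in particular $\omega''$ is parallel, hence closed.

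Real dimension four enters only in the concluding integration. Writing $\xi=g(v,\cdot)$ and using that $A$ is skew, one has $d\xi=2\,g(A\,\cdot\,,\,\cdot\,)=2\omega'+2\omega''$, where $\omega'=g(A'\,\cdot\,,\,\cdot\,)$ is of type $(1,1)$. Since $\omega''$ is closed and $d\xi$ is exact, Stokes' theorem on the compact $M$ gives $\int_M d\xi\wedge\omega''=0$. When the complex dimension is two, $\omega'\wedge\omega''$ vanishes pointwise for type reasons (it would be of type $(3,1)+(1,3)$), as do $\theta\wedge\theta$ and $\bar\theta\wedge\bar\theta$; therefore $0=\int_M d\xi\wedge\omega''=2\int_M\omega''\wedge\omega''=4\int_M\theta\wedge\bar\theta$. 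The final input is that $\theta\wedge\bar\theta$ is, pointwise, a nonnegative multiple of the canonical complex orientation form --- a purely algebraic fact about a form of top holomorphic degree, independent of the signature of $g$ --- so $\int_M\theta\wedge\bar\theta=0$ forces $\theta\equiv0$, whence $A''=0$ and $v$ is holomorphic.

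I expect the essential difficulty to be exactly the point where the indefiniteness of $g$ must be neutralized. In higher dimensions one would try to finish from ``$\omega''$ parallel and of type $(2,0)+(0,2)$'' by Hodge theory, and it is precisely the failure of Hodge theory for an indefinite metric that necessitates the $\ddb$ assumption in Theorem~\ref{thrma}. What makes four dimensions different is that the positivity invoked at the last step is not the (absent) positivity of $g$ but the intrinsic positivity of $\theta\wedge\bar\theta$ for a top-degree $(2,0)$-form; combined with the type-based vanishing of $\omega'\wedge\omega''$, $\theta\wedge\theta$ and $\bar\theta\wedge\bar\theta$, which is available only in complex dimension two, this is where I anticipate the argument to be genuinely special to the four-dimensional setting.
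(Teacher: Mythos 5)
Your proof is correct, and while it shares the paper's overall skeleton -- parallelism of the $J$-anti\-com\-mut\-ing part of $\nabla v$, followed by an integration argument on the compact manifold whose success rests on a positivity statement special to complex dimension two -- it implements both halves differently. For parallelism, the paper invokes the fact that the Lie derivative of a parallel tensor along an affine (here, Killing) field is parallel (Lemma~\ref{ldpar}), whereas you use the second-order Killing identity together with the commutation of curvature operators with $J$; these are equivalent in substance. The real divergence is the concluding step. The paper views $A=\pounds_v J$ as a parallel section of the complex line bundle $\xe$ of com\-plex-anti\-lin\-e\-ar skew-ad\-joint en\-do\-mor\-phisms, applies the $L^2\nh$-or\-thog\-o\-nal\-i\-ty (\ref{ltw}) of exact forms to parallel forms -- an integration by parts against the $g$-induced inner product, via the co\-dif\-fer\-en\-tial -- and draws its positivity from the definiteness of the $g$-induced fibre metric on the line bundle $\xe$ when $m=2$, a fact requiring the signature analysis recorded after the paper's proof. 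You instead integrate by parts with the metric-free wedge pairing, through Stokes' theorem applied to $d(\xi\wedge\omega'')$, and draw positivity from the pointwise inequality $\theta\wedge\bar\theta\ge0$ (relative to the canonical complex orientation), valid for any $(2,0)$-form of top holomorphic degree -- likewise metric-free, with complex dimension two entering through the type-induced vanishing of $\omega'\wedge\omega''$, $\theta\wedge\theta$ and $\bar\theta\wedge\bar\theta$. What your route buys: it avoids both the co\-dif\-fer\-en\-tial machinery and the fibre-metric signature computation, and it makes manifest that the required positivity owes nothing to $g$. What the paper's route buys: the signature formula for the fibre metric of $\xe$ shows precisely why such an argument cannot extend to indefinite metrics in complex dimension $m>2$ (that fibre metric is then indefinite), and its framework ((\ref{vsb}), (\ref{cvb}), (\ref{ltw})) is reused elsewhere in the paper, for instance in the closing remarks of Section~\ref{aa}.
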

The authors wish to express their gratitude to Kirollos Masood for bringing 
Ya\-ma\-da's paper \cite{yamada} to the first author's attention and 
discussing with him issues involving Theorem~\ref{thrmb}, formula (\ref{tfr}),
and the Appendix. We
also thank Fangyang Zheng for very useful suggestions about Lemma~\ref{ddbar},
and Takumi Yamada for a brief but helpful communication.

\renewcommand{\thetheorem}{\thesection.\arabic{theorem}}
\section{Proof of Theorem~\ref{thrmb}}\label{pb} 
\setcounter{equation}{0}
All manifolds, mappings, tensor fields and connections are assumed smooth.
\begin{lemma}\label{ldpar}Given a connection\/ $\,\nabla$ on a manifold\/
$\,M\nh$, let a vector field\/ $\,v\,$ on\/ $\,M\,$ be af\-fine in the sense 
that its local flow preserves\/ $\,\nabla\nnh$. Then, for any\/ 
$\,\nabla\nh$-par\-al\-lel tensor field\/ $\,\varTheta\hh$ on\/ $\,M\nh$, of
any type, the Lie derivative\/ $\,\pounds\hskip-1pt_v\w\varTheta\,$ is
$\,\nabla\nh$-par\-al\-lel as well. If\/ $\,\varTheta\,$ happens to be a
closed differential form,
$\,\pounds\hskip-1pt_v\w\varTheta
=\hh d\hs[\varTheta(v,\,\cdot\,,\ldots,\,\cdot\,)]$.
\end{lemma}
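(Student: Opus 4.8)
The two assertions rest on different mechanisms, so I would establish them in turn. The hypothesis that the local flow of $\,v\,$ preserves $\,\nabla\,$ is, infinitesimally, the statement $\,\Lie_v\nabla=0$, where $\,\Lie_v\nabla\,$ is the genuine $\,(1,2)\,$ tensor field obtained by differentiating the pulled-back connections (it is $\,C^\infty\nh$-bilinear, even though $\,\nabla\,$ itself is not a tensor). This reformulation is what lets the parallel-transport argument run.

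For the first claim I would invoke the standard commutation rule between Lie and covariant differentiation. For any tensor field $\,\varTheta\,$ one has
$$\Lie_v(\nabla\varTheta)=\nabla(\Lie_v\varTheta)+(\Lie_v\nabla)\hs\varTheta,$$
the last term being the natural pointwise action of the $\,(1,2)\,$ tensor $\,\Lie_v\nabla\,$ on $\,\varTheta$. Under the affine hypothesis that term vanishes, so $\,\Lie_v\,$ and $\,\nabla\,$ commute on tensor fields of every type. Applying this to a $\,\nabla\nh$-par\-al\-lel $\,\varTheta$, for which $\,\nabla\varTheta=0$, gives $\,\nabla(\Lie_v\varTheta)=\Lie_v(\nabla\varTheta)=0$, that is, $\,\Lie_v\varTheta\,$ is again $\,\nabla\nh$-par\-al\-lel. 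Equivalently, writing $\,\phi_t\,$ for the flow of $\,v$, each pullback $\,\phi_t^*\varTheta\,$ is $\,\nabla\nh$-par\-al\-lel because $\,\phi_t\,$ preserves both $\,\nabla\,$ and the parallelism of $\,\varTheta$; differentiating at $\,t=0\,$ the curve $\,t\mapsto\phi_t^*\varTheta$, which stays inside the finite-di\-men\-sion\-al space of parallel sections, then yields $\,\Lie_v\varTheta\,$ in that same space.

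For the second claim, which assumes only that $\,\varTheta\,$ is a closed form, I would apply Cartan's formula $\,\Lie_v\varTheta=d(\iota_v\varTheta)+\iota_v(d\varTheta)$. Closedness kills the last term, and by definition $\,\iota_v\varTheta=\varTheta(v,\,\cdot\,,\ldots,\,\cdot\,)$, so $\,\Lie_v\varTheta=d[\varTheta(v,\,\cdot\,,\ldots,\,\cdot\,)]$, as asserted. This part is independent of the affine and parallelism hypotheses.

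The routine parts are genuinely routine; the one point demanding care is the precise form of the commutation identity, in particular the bookkeeping of the extra covariant slot produced by $\,\nabla\,$ and the correct algebraic action of $\,\Lie_v\nabla\,$ on an arbitrary tensor type. Verifying that identity (or, in the flow version, confirming that $\,\phi_t^*\,$ preserves $\,\nabla\nh$-parallelism and that the $\,t$-de\-riv\-a\-tive remains in the parallel subspace) is the only place where a sign or index slip could occur; everything else follows formally.
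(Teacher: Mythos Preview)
Your proposal is correct and essentially mirrors the paper's treatment: the paper's primary argument for the first claim is the flow version (each $\phi_t^*\varTheta$ is parallel, so the $t$-derivative at $0$ is parallel), with the Leibniz identity $\Lie_v(\nabla\varTheta)=(\Lie_v\nabla)\varTheta+\nabla(\Lie_v\varTheta)$ noted afterward as an alternative, while you lead with the Leibniz identity and mention the flow argument as equivalent. For the second claim both you and the paper invoke Cartan's homotopy formula in the same way.
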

\begin{proof}
Clearly, $\,-\nnh\pounds\hskip-1pt_v\w\varTheta\,$ is the derivative with
respect to the real variable $\,t$, at $\,t=0$, of the push-for\-wards
$\,[d\phi\hn_t\w]\varTheta\,$ under the local flow $\,t\mapsto\phi\hn_t\w$ of
$\,v$. All $\,[d\phi\hn_t\w]\varTheta$ being
$\,\nabla\nh$-par\-al\-lel, so is $\,\pounds\hskip-1pt_v\w\varTheta$. For the 
final clause, use Cartan's homotopy formula 
$\,\pounds\hskip-1pt_v\w=\imath_v\w\hn d+d\hs\imath_v\w$ for
$\,\pounds\hskip-1pt_v\w$ acting on differential forms
\cite[Thm.\, 14.35, p.\,372]{lee}.
\end{proof}
Lemma~\ref{ldpar} also follows from the Leib\-niz rule: 
$\,\pounds\hskip-1pt_v\w(\nabla\nh\varTheta)
=(\pounds\hskip-1pt_v\w\nabla)\hs\varTheta
+\nabla\nh(\pounds\hskip-1pt_v\w\varTheta)$.

Let $\,(M\nh,g)\,$ now be a fixed
pseu\-do\hs-\hn K\"ahler manifold. If $\,v\,$ is any
vector field on $\,M\,$ then, with 
$\,J\,$ and $\,\nabla\nh v\,$ treated as bundle mor\-phisms
$\,T\nh M\to T\nh M\nh$, 
\begin{equation}\label{com}
\mathrm{for\ }\,B=\nabla\nh v\,\mathrm{\ and\ }\,A\nh
=\hs\pounds\hskip-1pt_v\w J\,\mathrm{\ one\ has\ }\,A=[J,B\hh]\,\mathrm{\ and\
}\,J\nh A=-\nh AJ\hh,
\end{equation}
which is immediate from the Leib\-niz rule. 
For the K\"ah\-ler form $\,\omega=g(J\hs\cdot\,,\,\cdot\,)\,$ of 
$\,(M\nh,g)\,$ and any $\,g$-\hn Kil\-ling  
vector field $\,v$, it follows from (\ref{com}) and Lemma~\ref{ldpar} that
\begin{equation}\label{pre}
\begin{array}{rl}
\mathrm{i)}&A\,=\hs\pounds\hskip-1pt_v\w J\,\mathrm{\ and\ }\,\,\alpha\,
=\hs\pounds\hskip-1pt_v\w\hs\omega\hs\,\mathrm{\ are\ related\ by\ }\,\alpha
=g(A\hs\cdot\,,\,\cdot\,)\hh,\mathrm{\ while}\\
\mathrm{ii)}&A^*\nh=-\nh A\hh,\hskip7.6ptJ\nh A=-\nh AJ\hh,\hskip7.6pt\nabla\nnh A
=0\hh,\hskip7.6pt\nabla\nh\alpha=0\hh,\mathrm{\ \ and\
}\,\alpha\,\mathrm{\ is\ exact.} 
\end{array}
\end{equation}

Given an exact $\,p\hs$-form $\,\alpha\,$ on a compact
pseu\-do\hs-Riem\-ann\-i\-an manifold $\hs(M\nh,g)$,
\begin{equation}\label{ltw}
\alpha\,\mathrm{ \ is\ }\,L\nh^2\nh\hyp\mathrm{or\-thog\-o\-nal\ to\ all\ 
parallel\ }\,p\,\mathrm{\ times\ co\-var\-i\-ant\ tensor\ fields\
}\,\theta\,\mathrm{\ on\ }\,M.
\end{equation}
Namely, $\,(\theta,\alpha)=(\mu,\alpha)=(\mu,d\hh\beta)
=(d\hh^*\hskip-1.5pt\mu,\beta)\,$ for $\,\beta\,$ with $\,\alpha=d\hh\beta\,$ 
and the skew-sym\-met\-ric part $\,\mu\,$ of $\,\theta$, while
$\,d\hh^*\hskip-1.5pt\mu=0$, as $\,\nabla\nh\mu=0$. Here $\,(\,,\hs)\,$ is the 
$\,L\nh^2$ inner product, assigning to two tensor fields of the same type the
integral over $\,M\,$ of their $\,g$-in\-ner product, and $\,d\hh^*$ denotes
the $\,g$-di\-ver\-gence.
\begin{remark}\label{riemc}By (\ref{pre}\hs-ii) and (\ref{ltw}), for a 
Kil\-ling field $\,v\,$ on a compact 
{\it Riemannian\/} K\"ahler manifold, $\,\pounds\hskip-1pt_v\w\hs\omega\,$ 
is $\,L\nh^2\nh$-or\-thog\-o\-nal to itself, and so, as a consequence of
(\ref{pre}\hs-i), $\,v\,$ must be real hol\-o\-mor\-phic.
\end{remark}
Let $\,(M\nh,g)\,$ be, again, a pseu\-do\hs-\hn K\"ah\-ler manifold. The
vec\-tor bun\-dle
mor\-phisms $\,C:T\nh M\to T\nh M\,$ having $\,C^*=-C\,$ (that is,
$\,g_x\w\nh$-skew-ad\-joint at every point $\,x\in M$) constitute the sections 
of
\begin{equation}\label{vsb}
\mathrm{\ the\ vector\ sub\-bundle\ }\,\mathfrak{so}\hh(T\nh M)\,\mathrm{\
of\ }\,\mathrm{End}_{\hskip.4pt\bbR\hskip0pt}\w(T\nh M)
=\mathrm{Hom}_{\hskip.4pt\bbR\hskip0pt}\w(T\nh M\nh,T\nh M)\hh.
\end{equation}
We denote by $\,\xe\hs$ the vector sub\-bundle of
$\,\mathfrak{so}\hh(T\nh M)$, the sections $\,C\,$ of which are also 
com\-plex-anti\-lin\-e\-ar (so that $\,JC=-C\hn J$, in addition to 
$\,C^*=-C$). Then
\begin{equation}\label{cvb}
\begin{array}{l}
\xe\hs\mathrm{\ is\ a\ complex\ vector\ bundle\ of\ rank\
}\,m(m-1)/2\mathrm{,\ where\ }\,m=\dimc M\nnh,\\
\mathrm{with\ a\ pseu\-do\hs}\hyp\mathrm{Her\-mit\-i\-an\ fibre\ metric\ 
having\ the\ real\ part\ induced\ by\ }g.
\end{array}
\end{equation}
In fact, $\,C\mapsto JC\,$ provides the complex structure for $\,\xe\nh$.
Nondegeneracy of $\,g$ restricted to $\,\xe\hs$ follows from
$\,g$-or\-thog\-o\-nal\-i\-ty of the decomposition
$\,\mathrm{End}_{\hskip.4pt\bbR\hskip0pt}\w(T\nh M)
=\,\mathrm{End}_{\hskip.4pt\bbC\hskip0pt}\w(T\nh M)\oplus\xe\nh\oplus\dz\nh$,
the sections $\,C\,$ of the sub\-bundle $\,\dz\,$ being characterized by
$\,JC=-C\hn J\,$ and $\,C^*\nh=C$, with
$\,\mathrm{End}_{\hskip.4pt\bbC\hskip0pt}\w(T\nh M)\,$ orthogonal to
$\,\xe\hs\oplus\dz\hs$ since any anti\-lin\-e\-ar mor\-phism 
$\,C:T\nh M\to T\nh M\,$ is conjugate, via $\,J$, to $\,-C$, and so 
$\,\mathrm{tr}_{\hskip.4pt\bbR\hskip0pt}\w C=0$. The
pseu\-do\hs-Her\-mit\-i\-an fibre metric in $\,\xe\hs$
arises by restricting
$\,\lg\hh\cdot\hs,\hn\cdot\hh\rg-i\lg J\hs\cdot\hs,\hn\cdot\hh\rg\,$ to
$\,\xe\nh$, for the
pseu\-do\hs-Riem\-ann\-i\-an fibre metric $\,\lg\hh\cdot\hs,\hn\cdot\hh\rg\,$
in $\,\mathrm{End}_{\hskip.4pt\bbR\hskip0pt}\w(T\nh M)\,$ induced by $\,g$.
The rank $\,m(m-1)/2\,$ follows since 
$\,\mathfrak{so}\hh(T\nh M)=\mathfrak{u}\hh(T\nh M)\oplus\xe\nh$, with
$\,\mathfrak{u}\hh(T\nh M)\subseteq\mathfrak{so}\hh(T\nh M)$ characterized
by having sections $\,C:T\nh M\to T\nh M\,$ that commute with $\,J\,$ (which,
due to their $\,g$-skew-ad\-joint\-ness, makes them also 
$\,g\hn^{\mathbf{c}}\nh$-skew-ad\-joint, for $\,g\hn^{\mathbf{c}}\nh
=g-i\hs\omega$): 
$\,\mathfrak{so}\hh(T\nh M)$ and $\,\mathfrak{u}\hh(T\nh M)\,$ have the
real ranks $\,m(2m-1)\,$ and $\,m^2\nh$.
\begin{proof}[Proof of Theorem~\ref{thrmb}]By (\ref{cvb}), with $\,m=2$,
the pseu\-do\hs-Her\-mit\-i\-an fibre metric in the {\it line\/} bundle
$\,\xe\hs$ must be positive or negative definite. Hence so is its
$\,g$-in\-duc\-ed real part. For any Kil\-ling field $\,v$, (\ref{pre}\hs-ii)
implies that $\,A=\pounds\hskip-1pt_v\w J\,$ is a section of $\,\xe\hs$ which,
due to (\ref{pre}) -- (\ref{ltw}), is $\,L\nh^2\nh$-or\-thog\-o\-nal to
itself, and so $\,\pounds\hskip-1pt_v\w J=0$.
\end{proof}
The above proof does not extend to compact pseu\-do\hs-\hn K\"ahler manifolds
$\hs(M\nh,g)\,$ of complex dimensions $\,m>2\,$ with indefinite metrics.
Namely, if
the pair $\,(j,k)$ represents the metric signature of $\,g$, with $\,j\,$
minuses and $\,k\,$ pluses (both $\,j,k$ even, $\,j+k=2m$), then the
analogous signature of the real part (induced by $\,g$) of the
pseu\-\hbox{do\hs-}Her\-mit\-i\-an fibre metric in $\,\xe\hs$ is
$\,(jk/2,\,[\hs j^2\nh+k^2\nh-2(j+k)]/4)$, with both components (indices)
positive unless $\,jk=0\,$ or $\,j=k=2$.

One easily verifies this last claim, about the signature, by using a
$\,J\hskip-1.5pt_x\w$-in\-var\-i\-ant time\-like-space\-like orthogonal
decomposition of $\,\txm\nh$, at any $\,x\in M\nh$, to obtain obvious
three-sum\-mand orthogonal decompositions of both
$\,\mathfrak{so}\hh(T\nh M)\,$ and $\,\mathfrak{u}\hh(T\nh M)$ at $\,x$,
two summands being space\-like, and one time\-like.

\section{Proof of Theorem~\ref{thrma}}\label{pa} 
\setcounter{equation}{0}
We denote by $\,\og^{p,q}\nnh M\,$ the space of com\-plex-val\-ued 
differential $\,(p,q)\,$ forms on a complex manifold $\,M\nh$. On such
$\,M\nh$, as $\,\cro\hs\zeta=0\,$ whenever $\,d\hs\zeta=0$,
\begin{equation}\label{clh}
\mathrm{closedness\ of\ a\ }\,(p,0)\,\mathrm{\ form\ }\,\zeta\,\mathrm{\
implies\ its\ hol\-o\-mor\-phic\-i\-ty.}
\end{equation}
Conversely, according to
\cite[p.\,269, subsect.\,(5.21)]{deligne-griffiths-morgan-sullivan} and
\cite[p.\,101,\,Corollary 9.5]{ueno}, on a compact complex $\,\ddb$ manifold,
\begin{equation}\label{hcl}
\mathrm{all\ hol\-o\-mor\-phic\ differential\ forms\ are\ closed.}
\end{equation}
Since many expositions do not state what happens when, in the $\,\ddb\,$ 
property (\ref{ddb}), $\,p\,$ or $\,q\,$ equals $\,0$, we note that, as 
Fangyang Zheng pointed out to us, (\ref{ddb}) for $\,(p,0)\,$ forms easily
follows from the case where $\,p$ and $\,q\,$ are positive.
\begin{lemma}\label{ddbar}On a compact complex manifold\/ $M\nh$ with the 
``\hh positive\/ $(p,q)$ version'' of the\/ $\,\ddb$ property, if\/
$\,\xi\in\og^{p,0}\nnh M\nh$, for 
$\,p\ge1$, and\/ $\,\pl\hs\xi\,$ is closed, then\/ $\,\pl\hs\xi=0$.  
\end{lemma}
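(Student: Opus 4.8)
The plan is to exploit the hypothesis by moving from the $(p+1,0)$ form $\,\pl\hs\xi$, to which the positive‑bidegree $\,\ddb$ property cannot be applied since it sits in the $\,q=0\,$ column, to the $\,(p,1)$ form $\,\cro\hs\xi$, which lies in a bidegree where that property is available. First I would restate the assumption: because $\,\pl\hs\xi\,$ is closed and $\,\pl^2\nnh=0$, the only surviving component of $\,d(\pl\hs\xi)\,$ gives $\,\dbd\hs\xi=0$, or equivalently $\,\ddb\hs\xi=0$. Using $\,\cro^2\nnh=0\,$ it then follows that $\,d(\cro\hs\xi)=\ddb\hs\xi=0$, so $\,\cro\hs\xi\,$ is a closed, $\,\cro\hs$-exact form of type $\,(p,1)$, with both indices positive precisely because $\,p\ge1$.

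Next I would apply the $\,\ddb$ property in its positive $\,p,q\,$ form to $\,\cro\hs\xi$, producing a $\,(p-1,0)$ form $\,\lambda\in\og^{p-1,0}\nnh M\,$ with $\,\cro\hs\xi=\ddb\hh\lambda$. Setting $\,\eta=\xi+\pl\hs\lambda\in\og^{p,0}\nnh M$, the identity $\,\ddb=-\dbd\,$ yields $\,\cro\hs\eta=\cro\hs\xi+\dbd\hs\lambda=0$, so $\,\eta\,$ is holomorphic, while $\,\pl\hs\eta=\pl\hs\xi\,$ because $\,\pl^2\nnh=0$. In this way the given $\,\xi$, which need not be holomorphic, is replaced by a holomorphic form having the same $\,\pl\hs$-image; this reduction is the single place where the positive‑bidegree $\,\ddb$ hypothesis is actually invoked.

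Finally, since $\,\eta\,$ is a holomorphic $\,(p,0)$ form on the compact complex manifold $\,M$, it is closed by (\ref{hcl}); as $\,\eta\,$ has type $\,(p,0)$, the vanishing of $\,d\hs\eta\,$ forces its $\,(p+1,0)$ component $\,\pl\hs\eta\,$ to vanish, whence $\,\pl\hs\xi=\pl\hs\eta=0$, as required.

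I expect the main obstacle to be conceptual rather than computational. The object the hypothesis directly concerns, namely $\,\pl\hs\xi$, lives in bidegree $\,(p+1,0)$, outside the range of the positive‑bidegree $\,\ddb$ property, so the essential maneuver is to carry the information over to $\,\cro\hs\xi\,$ in bidegree $\,(p,1)$ before that property can be used. One must also check that the form $\,\lambda\,$ furnished by the $\,\ddb$ property has type $\,(p-1,0)$, so that $\,\pl\hs\lambda\,$ is again of type $\,(p,0)\,$ and $\,\eta\,$ remains in $\,\og^{p,0}\nnh M$; this degree bookkeeping is exactly what makes the passage to the holomorphic case, and the subsequent appeal to (\ref{hcl}), legitimate.
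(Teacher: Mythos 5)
Your proposal is correct and follows essentially the same route as the paper's own proof: both deduce $\,\ddb\hs\xi=0\,$ from closedness of $\,\pl\hs\xi$, apply the positive-bidegree $\,\ddb\,$ lemma to the closed $\,\cro\hs$-exact $\,(p,1)\,$ form $\,\cro\hs\xi$, correct $\,\xi\,$ by a $\,\pl\hs$-exact $\,(p,0)\,$ form so as to make it holomorphic, and then conclude via (\ref{hcl}) that $\,\pl\hs\xi=0$. The only difference is cosmetic -- a sign convention in the correcting form (the paper writes $\,\cro\hs\xi=\dbd\hs\eta\,$ and uses $\,\xi-\pl\hs\eta$, while you write $\,\cro\hs\xi=\ddb\hh\lambda\,$ and use $\,\xi+\pl\hs\lambda$).
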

\begin{proof}As $\,0=d\hh\pl\hs\xi=\dbd\hs\xi=-\ddb\hs\xi$, the ``positive''
$\,\ddb\,$ lemma applied to the closed $\,\cro\hs$-exact $\,(p,1)\,$ form
$\,\cro\hs\xi\,$ gives $\,\cro\hs\xi=\dbd\hs\eta\,$ for some
$\,\eta\in\og^{p-1,0}\nnh M\nh$. Being thus hol\-o\-mor\-phic,
$\,\xi-\pl\hs\eta\in\og^{p,0}\nnh M\,$ is closed by (\ref{hcl}), and 
$\,0=\pl\hs(\xi-\pl\hs\eta)=\pl\hs\xi$.
\end{proof}
Lemma~\ref{ddbar} implies, via complex conjugation, its analog 
for $\,(0,q)\,$ forms. Also by Lemma~\ref{ddbar}, 
on a compact complex manifold $\,M\,$ with the $\,\ddb$ property,
\begin{equation}\label{exz}
\mathrm{the\ only\ exact\ }\,(p,0)\,\mathrm{\ form\ }\,\zeta\,\mathrm{\ on\
}\,M\,\mathrm{\ is\ }\,\zeta=0\hh,
\end{equation}
since exactness of $\,\zeta\in\og^{p,0}\nnh M\,$ amounts to its
$\,\pl\hs$-exact\-ness and implies its closedness.

For a pseu\-do\hs-\hn K\"ahler manifold $\,(M\nh,g)$, a
bundle mor\-phism $\,A:T\nh M\to T\nh M\nh$, and the corresponding 
twice-co\-var\-i\-ant tensor field $\,\alpha=g(A\hs\cdot\,,\,\cdot\,)$, one
clearly has
\begin{equation}\label{iff}
\alpha(J\hs\cdot\,,J\hs\cdot\hs)=\pm\hs\alpha\,\mathrm{\ if\ and\ only\ if\
}\,J\nh A=\pm\hh AJ\hh,\quad\mathrm{with\ either\ sign\ }\,\pm\hh.
\end{equation}
Given a pseu\-do\hs-\hn K\"ahler manifold $\,(M\nh,g)$, vector fields $\,u,v\,$ on
$\,M\,$ and sections $\,A,C\,$ of $\,\mathfrak{so}\hh(T\nh M)$, cf.\
(\ref{vsb}), may be used to represent a com\-plex-val\-ued 
$\,1$-form $\,\xi$ and $\,2$-form $\,\zeta\,$ on $\,M\nh$, as follows,
\begin{equation}\label{xie}
\xi=u+i\hh v\hh,\quad\zeta=A+i\hh C\hh,
\end{equation}
meaning that $\,\xi=g(u,\,\cdot\,)+ig(v,\,\cdot\,)\,$ and 
$\,\zeta=g(B\hs\cdot\,,\,\cdot\,)+ig(C\hs\cdot\,,\,\cdot\,)$. 
We prefer not to think of (\ref{xie}) as sections of the complexifications of
$\,T\nh M\,$ or $\,\mathfrak{so}\hh(T\nh M)$. For a vector field $\,v\,$
treated via (\ref{xie}) as a real $\,1$-form, and $\,B=\nabla\nh v$, our
factor convention for the exterior derivative gives
\begin{equation}\label{fcv}
dv\,=\,B\,-\,B^*,\quad\mathrm{\ and\ so\
}\,d\hh(J\hn v)\,=\,\nabla(J\hn v)\,-[\nabla(J\hn v)]^*\hs
=\,JB+B^*\hskip-3ptJ\hh.
\end{equation}
\begin{remark}\label{realp}On a complex manifold, a real-val\-ued $\,2$-form
$\,\alpha\,$ is the real part of a com\-plex-bi\-lin\-e\-ar com\-plex-val\-ued
$\,2$-form $\,\zeta\,$ if and only if 
$\,\alpha(J\hs\cdot\,,J\hs\cdot\hs)=-\alpha$, and then necessarily
$\,\zeta=\alpha-i\hh\alpha(J\hs\cdot\,,\,\cdot\,)$. (This clearly remains
valid for arbitrary twice-co\-var\-i\-ant tensor fields, without
skew-sym\-me\-try.)
\end{remark}
\begin{remark}\label{twozr}For a com\-plex-val\-ued 
$\,2$-form $\,\zeta\,$ on a complex manifold $\,M\nh$, having bi\-de\-gree
$\,(2,0)$, or $\,(0,2)$, or $\,(1,1)\,$ clearly amounts to its being 
com\-plex-bi\-lin\-e\-ar, or bi-anti\-lin\-e\-ar or, respectively,
$\,J$-in\-var\-i\-ant: $\,\zeta(J\hs\cdot\,,J\hs\cdot\hs)=\zeta$. Sums
$\,\zeta\,$ of $\,(2,0)$ and $\,(0,2)\,$ forms are similarly characterized
by $\,J$-anti-in\-var\-i\-ance: $\,\zeta(J\hs\cdot\,,J\hs\cdot\hs)=-\hs\zeta$. 
Thus, by (\ref{iff}), in the pseu\-do\hs-\hn K\"ahler case,
$\,\zeta=A+i\hh C\,$ in (\ref{xie}) is a $\,(1,1)\,$ form if and only if
$\,A\,$ and $\,C\,$ commute with $\,J$.
\end{remark}
\begin{lemma}\label{parxi}For a Kil\-ling vector field\/ $\,v\,$ on a
pseu\-do\hs-\hn K\"ahler manifold\/ $\,(M\nh,g)$, using the notation of\/
{\rm(\ref{xie})}, we have
\begin{equation}\label{dxe}
\begin{array}{l}
\xi\in\og^{1,0}\nnh M\hh,\quad\zeta\in\og^{2,0}\nnh M\hh,\quad\pl\hs\xi\,
=\,\zeta\hh,\quad\cro\hs\xi\,=\,i\hh(J\hn B\hn J-B)\hh,\mathrm{\ \ where}\\
\xi\,=\,J\hn v\,-\,iv\hh,\quad\zeta\,=\,A\,-\,iAJ\hh,\quad\mathrm{with\ \ }\,A
=[J,B\hh]\,\mathrm{\ \ for\ \ }\hs B=\nabla\nh v\hh.
\end{array}
\end{equation}
\end{lemma}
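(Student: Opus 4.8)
The plan is to verify the four assertions of (\ref{dxe}) by a single direct computation, whose only real inputs are the factor-convention formula (\ref{fcv}), the commutator description $\,A=[J,B\hh]\,$ from (\ref{com}), and the fact that, for a Kil\-ling field, $\,B=\nabla\nh v\,$ is $\,g$-skew-ad\-joint, i.e.\ $\,B^*=-B$. The four claims fall into three groups: first, that $\,\xi=J\hn v-iv\,$ is of type $\,(1,0)$; second, the explicit value of $\,d\hh\xi$; and third, the splitting of $\,d\hh\xi\,$ into its $\,(2,0)\,$ and $\,(1,1)\,$ parts, to be matched with $\,\pl\hs\xi\,$ and $\,\cro\hs\xi$.

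For the first, I would check the defining relation $\,\xi(J\hs\cdot\,)=i\hh\xi\,$ of a $\,(1,0)\,$ form straight from $\,\xi=g(J\hn v,\,\cdot\,)-i\hh g(v,\,\cdot\,)$, using that $\,J\,$ is a $\,g$-isom\-e\-try and $\,g$-skew-ad\-joint: then $\,g(J\hn v,J\hs\cdot\,)=g(v,\,\cdot\,)\,$ and $\,g(v,J\hs\cdot\,)=-\hs g(J\hn v,\,\cdot\,)$, so both sides of $\,\xi(J\hs\cdot\,)=i\hh\xi\,$ evaluate to $\,g(v,\,\cdot\,)+i\hh g(J\hn v,\,\cdot\,)$; hence $\,\xi\in\og^{1,0}\nnh M$. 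For the value of $\,d\hh\xi$, the Kil\-ling condition $\,B^*=-B\,$ collapses (\ref{fcv}) to $\,dv=B-B^*=2B\,$ and $\,d\hh(J\hn v)=J\nh B+B^*J=J\nh B-B\hn J=[J,B\hh]=A$, whence $\,d\hh\xi=d\hh(J\hn v)-i\hh dv=A-2iB$.

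The core step is to split $\,A-2iB\,$ by bidegree. From $\,A=[J,B\hh]\,$ one has $\,A\hn J=J\nh B\hn J+B$, which gives the algebraic identity $\,A-2iB=(A-iA\hn J)+i\hh(J\nh B\hn J-B)$, and it remains to recognize the two summands. Since $\,A\,$ anti\-com\-mutes with $\,J$, we get $\,J\nh A\hn J=A$, so $\,g(A\,\cdot\,,\,\cdot\,)\,$ is $\,J$-anti-in\-var\-i\-ant; by Remark~\ref{realp} it is the real part of the complex-bi\-lin\-e\-ar (hence $\,(2,0)$) form $\,g(A\,\cdot\,,\,\cdot\,)-i\hh g(A\hn J\,\cdot\,,\,\cdot\,)$, which is precisely $\,\zeta=A-iA\hn J$; thus $\,\zeta\in\og^{2,0}\nnh M$. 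On the other hand $\,J\nh B\hn J-B\,$ commutes with $\,J$, so by Remark~\ref{twozr} the form $\,i\hh(J\nh B\hn J-B)\,$ is of type $\,(1,1)$. As $\,J\,$ is parallel, hence integrable, the bidegree decomposition $\,d=\pl+\cro\,$ is available and unique, with $\,\pl\,$ sending $\,(1,0)\,$ forms to $\,(2,0)\,$ forms and $\,\cro\,$ to $\,(1,1)\,$ forms; comparing with $\,d\hh\xi=A-2iB\,$ therefore forces $\,\pl\hs\xi=\zeta=A-iA\hn J\,$ and $\,\cro\hs\xi=i\hh(J\nh B\hn J-B)$, which is all of (\ref{dxe}).

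I expect the main obstacle to be purely clerical: keeping the factor and sign conventions consistent across the metric identification of vectors with $\,1$-forms, the endomorphism-versus-$\,2$-form dictionary, and the two sign choices in (\ref{iff}). Along the way I would record the small compatibility checks that $\,A\hn J\,$ and $\,J\nh B\hn J-B\,$ are again $\,g$-skew-ad\-joint, so that $\,\zeta\,$ and $\,i\hh(J\nh B\hn J-B)\,$ are genuine $\,2$-forms. The one step that is not a formality is ruling out a hidden $\,(0,2)\,$ component of $\,d\hh\xi$; this is automatic from integrability of $\,J$, but I would make it explicit by noting that the $\,J$-anti-in\-var\-i\-ant part $\,A-iA\hn J\,$ is, via Remark~\ref{realp}, genuinely $\,(2,0)\,$ and not merely of mixed type $\,(2,0)+(0,2)$.
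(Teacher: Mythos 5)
Your proposal is correct and matches the paper's own proof essentially step for step: the same computation $\,d\hh\xi=A-2iB\,$ from (\ref{fcv}) with $\,B^*\nh=-B$, the same algebraic splitting $\,A-2iB=(A-iA\hn J)+i\hh(J\nh B\hn J-B)$, the same appeals to Remarks~\ref{realp} and~\ref{twozr} to identify the $\,(2,0)\,$ and $\,(1,1)\,$ summands, and the same supporting checks of skew-ad\-joint\-ness and com\-plex-(bi)lin\-e\-ar\-i\-ty. No gaps; the only cosmetic difference is that you verify $\,\xi\in\og^{1,0}\nnh M\,$ by the explicit identity $\,\xi(J\hs\cdot\,)=i\hh\xi$, which the paper dismisses as immediate.
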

\begin{proof}First, $\,J\hn B\hn J-B$, as well as
$\,A=[J,B\hh]\,$ and $\,AJ$, are $\,g_x\w\nh$-skew-ad\-joint at every point
$\,x\in M\nh$, since so is $\,B=\nabla\nh v$, and $\,A\,$ anti\-com\-mutes
with $\,J$, cf.\ (\ref{com}). Thus, $\,\xi,\zeta\,$ and
$\,\gamma=i\hh(J\hn B\hn J-B)\,$ are indeed  
differential forms of degrees $\,1,\,2,\,2$.

Furthermore, $\,\xi\,$ is com\-plex-lin\-e\-ar, and $\,\zeta\,$
com\-plex-bi\-lin\-e\-ar. This is immediate for $\,\xi$. For $\,\zeta$,
note that 
$\,\zeta=\alpha-i\hh\alpha(J\hs\cdot\,,\,\cdot\,)$, where
$\,\alpha=g(A\hs\cdot\,,\,\cdot\,)$, while (\ref{com}) and (\ref{iff}) give
$\,\alpha(J\hs\cdot\,,J\hs\cdot\hs)=-\alpha$. Now we can use
Remark~\ref{realp}.

Thus, $\,\xi\in\og^{1,0}\nnh M\nh$. Also, according to Remark~\ref{twozr},
$\,\zeta\in\og^{2,0}\nnh M\,$ and $\,\gamma\in\og^{1,1}\nnh M\nh$, since
$\,J\hn B\hn J-B\,$ obviously commutes with $\,J$. 
Finally, for $\,A=[J,B\hh]$, (\ref{fcv}) with $\,B^*\nh=-B\,$ gives 
$\,d\hh\xi=A-2i\hh B=[A-i(J\hn B\hn J+B)]+i\hh(J\hn B\hn J-B)$, while
the summands $\,A-i(J\hn B\hn J+B)=A-iAJ=\hs\zeta\,$ and
$\,i\hh(J\hn B\hn J-B)=\gamma\,$ lie in $\,\og^{2,0}\nnh M$  
and $\,\og^{1,1}\nnh M\nh$, which completes the proof.
\end{proof}
\begin{proof}[Proof of Theorem~\ref{thrma}]By (\ref{pre}) and (\ref{iff}), the
$\,\pl\hs$-exact $\,(2,0)\,$ form $\,\zeta=\pl\hs\xi$ in
(\ref{dxe}) is parallel, and hence closed. Lemma~\ref{ddbar} now gives
$\,\zeta=0$, so that $\,\,\pounds\hskip-1pt_v\w J=A=0\,$ due to 
(\ref{com}) and (\ref{dxe}).
\end{proof}

\section{Another proof of Theorem~\ref{thrma}}\label{aa} 
\setcounter{equation}{0}
On a compact complex manifold $\,M\,$ with the $\,\ddb$ property, every
cohomology space $\,H^k\nnh(M,\bbC)\,$ has the Hodge decomposition
\cite[p.\,269, subsect.\,(5.21)]{deligne-griffiths-morgan-sullivan}:
\begin{equation}\label{hdc}
H^k\nnh(M,\bbC)=H^{k,0}\nnh M\oplus H^{k-1,1}\nnh M\oplus\ldots
\oplus H^{1,k-1}\nnh M\oplus H^{0,k}\nnh M\hh,
\end{equation}
with each $\,H^{p,q}\nnh M\,$
consisting of cohomology classes of closed $\,(p,q)\,$ forms. The complex
conjugation of differential forms descends to a real-lin\-e\-ar involution of 
$\,H^k\nnh(M,\bbC)$, the fixed points of which obviously
are the real cohomology classes (those containing real closed differential
forms). In terms of the decomposition (\ref{hdc}), a complex cohomology class
\begin{equation}\label{crl}
\begin{array}{l}
\mathrm{is\ real\ if\ and\ only\ if,\ for\ all\ }\,p\hh\mathrm{\ and\
}\,q\mathrm{,\nnh\ its\ }\hs H^{q,p}\nh\mathrm{\ com}\hyp\\
\mathrm{ponent\ equals\ the\ conjugate\ of\ its\ }\hs\,H^{p,q}\hn\mathrm{\
component.}
\end{array}
\end{equation}
The standard formula $\,N(\x,\y)=[\x,\y]+J[J\x,\y]+J[\x,J\y]-[J\x,J\y]$, for
the Nijen\-huis tensor $\,N\,$ of an al\-most-com\-plex structure $\,J\,$
on a manifold $\,M\,$ and any vector fields $\,\x,\y$, clearly becomes
\begin{equation}\label{tfr}
N(\x,\y)\,=\,[\nabla_{\!\!J\hn\y}\w J]\x\,-\,[\nabla_{\!\!J\hn\x}\w J]\y\,
+\,J[\nabla_{\nnh\!\x}\w\nnh J]\y\,-\,J[\nabla_{\nh\!\y}\w J]\x
\end{equation}
when one uses any fixed tor\-sion\-free connection $\,\nabla\hs$ on $\,M\nh$.
We call $\,\nabla\hs$ a {\it K\"ah\-ler connection\/} for the given
al\-most-com\-plex structure $\,J\,$ if it is tor\-sion\-free and
$\,\nabla\nnh J=0$. By (\ref{tfr}), $\,J\,$ then must be integrable. This
implies {\it integrability of\/ $\,J\,$ in any \hbox{pseu\-do\hs-} K\"ahler
manifold}, as one then has $\,\nabla\nnh J=0\,$ for the Le\-vi-Ci\-vi\-ta
connection $\,\nabla\nh$.
\begin{lemma}\label{expar}For any $\,\nabla\nh$-par\-al\-lel real\/
$\,2$-form\/ $\,\alpha\,$ on a complex manifold\/ $\,M$ with a K\"ah\-ler
connection\/ $\,\nabla\nnh$, such that\/
$\,\alpha(J\hs\cdot\,,J\hs\cdot\hs)=-\alpha$, the com\-plex-val\-ued\/ 
$\,2$-form $\,\zeta=\alpha-i\alpha(J\hs\cdot\,,\,\cdot\,)$ is
hol\-o\-mor\-phic. If, in addition, $\,M\,$ is also compact and has the\/ 
$\,\ddb$ property, while\/ $\,\alpha\,$ is exact, then\/ $\,\alpha=0$.
\end{lemma}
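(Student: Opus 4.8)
The plan is to prove Lemma~\ref{expar} in two stages, matching its two sentences. For the first claim, I would show that $\,\zeta=\alpha-i\alpha(J\hs\cdot\,,\,\cdot\,)\,$ is a holomorphic $\,(2,0)\,$ form. First, since $\,\alpha(J\hs\cdot\,,J\hs\cdot\hs)=-\alpha$, Remark~\ref{realp} tells us that $\,\zeta\,$ is exactly the com\-plex-bi\-lin\-e\-ar (hence $\,(2,0)$) form whose real part is $\,\alpha$; so $\,\zeta\in\og^{2,0}\nnh M$. It remains to verify holomorphicity. Because $\,\nabla\hs$ is a K\"ah\-ler connection, $\,\nabla\nnh J=0$, and since $\,\alpha\,$ is $\,\nabla\nh$-par\-al\-lel, the associated tensor $\,\alpha(J\hs\cdot\,,\,\cdot\,)\,$ is parallel too; thus $\,\zeta\,$ is itself $\,\nabla\nh$-par\-al\-lel. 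A parallel differential form is closed (its exterior derivative is the skew part of $\,\nabla\zeta=0$), and by (\ref{clh}) a closed $\,(2,0)\,$ form is holomorphic. That settles the first assertion.

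For the second claim, I would invoke (\ref{exz}). Under the added hypotheses, $\,M\,$ is compact with the $\,\ddb$ property, and $\,\alpha\,$ is exact. The key observation is that $\,\zeta\,$ is then exact as well: indeed, $\,\zeta\,$ is a closed $\,(2,0)\,$ form whose real part $\,\alpha\,$ is exact, and one checks that exactness of the real part forces exactness of the com\-plex-bi\-lin\-e\-ar completion. Concretely, if $\,\alpha=d\hh\beta\,$ with $\,\beta\,$ a real $\,1$-form, then the $\,(1,0)\,$ component of $\,\beta\,$ should provide a primitive, using that $\,\zeta\,$ has pure bi\-de\-gree $\,(2,0)$. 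Once $\,\zeta\,$ is known to be an exact $\,(2,0)\,$ form on a compact $\,\ddb$ manifold, (\ref{exz}) gives $\,\zeta=0$, whence $\,\alpha=\mathrm{Re}\,\zeta=0$.

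The main obstacle I anticipate is the clean passage from exactness of the real $\,2$-form $\,\alpha\,$ to exactness of the com\-plex-val\-ued $\,(2,0)\,$ form $\,\zeta$, since $\,\alpha=\mathrm{Re}\,\zeta\,$ records only half the data. The cleanest route is probably to avoid manipulating primitives directly and instead argue at the level of (\ref{exz}): exactness of $\,\zeta\in\og^{2,0}\nnh M\,$ is equivalent to its $\,\pl\hs$-exact\-ness, so it suffices to exhibit $\,\zeta=\pl\hs\eta\,$ for some $\,\eta\in\og^{1,0}\nnh M$. Taking $\,\eta\,$ to be the $\,(1,0)\,$ part of a real primitive $\,\beta\,$ of $\,\alpha$, one computes $\,\pl\hs\eta\,$ and matches its $\,(2,0)\,$ component with $\,\zeta\,$ by comparing real parts, exploiting that $\,\zeta\,$ has no $\,(1,1)\,$ or $\,(0,2)\,$ component. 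Alternatively, and more in the spirit of Section~\ref{aa}, one can bypass primitives entirely: the Hodge decomposition (\ref{hdc}) shows that the cohomology class of a closed $\,(2,0)\,$ form is determined by its $\,H^{2,0}\nnh M\,$ component, and exactness of $\,\alpha\,$ forces the real cohomology class of $\,\alpha$—hence, via (\ref{crl}), the $\,(2,0)\,$ class of $\,\zeta$—to vanish, so $\,\zeta\,$ is exact and (\ref{exz}) finishes the argument.
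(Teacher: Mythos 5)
Your proof is correct, and its first half coincides with the paper's: Remarks~\ref{realp}--\ref{twozr} make $\,\zeta\,$ a $\,(2,0)\,$ form, parallelism of $\,\alpha\,$ and of $\,J\,$ makes $\,\zeta\,$ parallel and hence closed, and (\ref{clh}) gives holomorphicity. For the final clause the two arguments diverge. The paper stays at the level of cohomology: exactness of $\,\alpha\,$ makes $\,[i\zeta]\,$ a \emph{real} class lying in $\,H^{2,0}\nnh M$, so by (\ref{crl}) its $\,(2,0)\,$ component must equal the conjugate of its (zero) $\,(0,2)\,$ component, forcing $\,[i\zeta]=0$; thus $\,\zeta\,$ is exact and (\ref{exz}) gives $\,\zeta=0$. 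Your preferred route is genuinely different and more elementary: from a real primitive $\,\beta\,$ of $\,\alpha$, with $\,\eta\,$ its $\,(1,0)\,$ part, comparison of bidegree components in $\,d\hh\beta=\alpha\,$ gives $\,\zeta=2\hs\pl\hs\eta\,$ (note the factor $\,2$: since $\,\alpha\,$ has no $\,(1,1)\,$ part, the $\,(2,0)\,$ component of $\,\alpha=\mathrm{Re}\,\zeta\,$ is $\,\zeta/2$); as $\,\zeta\,$ is closed, Lemma~\ref{ddbar} then yields $\,\zeta=0\,$ directly. This buys independence from the Hodge decomposition: it uses only Lemma~\ref{ddbar}, i.e.\ the same ingredient that underlies (\ref{exz}), so in effect it proves the lemma with the technology of Section~\ref{pa}, whereas the paper's Section~\ref{aa} proof deliberately trades that for (\ref{hdc})/(\ref{crl}). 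Your stated alternative (killing $\,[\zeta]\,$ via (\ref{hdc}) and (\ref{crl})) is, in substance, the paper's own argument. One small caution: the claimed ``equivalence'' of exactness and $\,\pl\hs$-exactness for $\,(2,0)\,$ forms should only be invoked in the direction exact $\Rightarrow$ $\pl\hs$-exact; the converse requires closedness plus Lemma~\ref{ddbar} -- which is how your argument in fact runs, and how the paper itself glosses (\ref{exz}).
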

\begin{proof}The relation $\,\alpha(J\hs\cdot\,,J\hs\cdot\hs)=-\alpha\,$
amounts to com\-plex-bi\-lin\-e\-ar\-i\-ty of $\,\zeta$, and so 
$\,\zeta\in\og^{2,0}\nnh M\,$ (Remarks~\ref{realp} --~\ref{twozr}). Being
$\,\nabla\nh$-par\-al\-lel, $\,\zeta\,$ is closed, and hence 
hol\-o\-mor\-phic due to (\ref{clh}). The final clause: 
exactness of $\,\alpha\,$ makes $\,[i\zeta]\in H^{2,0}\nnh M\,$ 
a real cohomology class, so that, by  (\ref{crl}), $\,\zeta\,$ is exact,
and (\ref{exz}) gives $\,\zeta=0$.
\end{proof}
\begin{proof}[Another proof of Theorem~\ref{thrma}]Given a Kil\-ling field
$\,v$, the differential $\,2$-form $\,\alpha=\pounds\hskip-1pt_v\w\hs\omega\,$ 
is parallel and exact by (\ref{pre}), while (\ref{pre}) gives 
$\,J\nh A=-\nh AJ\,$ for $\,A=\pounds\hskip-1pt_v\w J$, related to
$\,\alpha\,$ via $\,\alpha=g(A\hs\cdot\,,\,\cdot\,)$, and so
$\,\alpha(J\hs\cdot\,,J\hs\cdot\hs)=-\alpha\,$ due to (\ref{iff}). 
Lemma~\ref{expar} and
(\ref{pre}\hs-i) now yield $\,\pounds\hskip-1pt_v\w\hs\omega=\alpha=0\,$ and 
$\,\pounds\hskip-1pt_v\w J=0$.
\end{proof}
We do not know whether -- aside from Theorem~\ref{thrmb} and the Riemannian
case -- Theorem~\ref{thrma} remains valid without the $\hs\ddb\hs$ hypothesis.
For
possible future reference, let us note that, as shown above, one has the
following conclusions about a Kil\-ling field $\,v\,$ on a compact 
pseu\-do\hs-\nh K\"ahler manifold, whether or not the $\,\ddb$ property is
assumed. First, for $\,\alpha=\pounds\hskip-1pt_v\w\hs\omega$, the
com\-plex-val\-ued $\,2$-form
$\,\zeta=\alpha-i\alpha(J\hs\cdot\,,\,\cdot\,)\,$ is parallel and 
hol\-o\-mor\-phic (see the preceding proof and Lemma~\ref{expar}). Also,
by (\ref{pre}), $\,\alpha\,$ is exact, while
$\,A=\pounds\hskip-1pt_v\w J:T\nh M\to T\nh M\,$ is parallel
and com\-plex-anti\-lin\-e\-ar, as well as nil\-po\-tent at every point. This
last conclusion follows since the constant function 
$\,\mathrm{tr}_{\hskip.4pt\bbR\hskip0pt}\w A\hn^k\nh$, with any integer
$\,k\ge1$, has zero integral as a consequence of (\ref{ltw}) applied to
$\,\alpha=g(A\hs\cdot\,,\,\cdot\,)\,$ and
$\,\theta=g(A^{k-1}\cdot\,,\,\cdot\,)$.

\setcounter{section}{1}
\renewcommand{\thesection}{\Alph{section}}
\renewcommand{\theequation}{\Alph{section}.\arabic{equation}}
\setcounter{theorem}{0}
\renewcommand{\thetheorem}{\thesection.\arabic{theorem}}
\section*{Appendix: Ya\-ma\-da's argument}\label{ya}
\setcounter{equation}{0}
Ya\-ma\-da's claim \cite[Proposition 3.1]{yamada} that on a compact
pseu\-do\hs-\hn K\"ahler manifold, Kil\-ling fields are real hol\-o\-mor\-phic, 
has a proof which reads, {\it verbatim},
\begin{equation}\label{vrb}
\begin{array}{l}
\mathrm{Let\ }\,X\,\mathrm{\ be\ a\ Kil\-ling\ vector\ field.\ From\ 
Propositions\ 1.2\ and}\\
\mathrm{2.12,\ }\,\,Z\hs=\hs X\hs-\,\sqrt{-\nnh1\,}JX\,\,\mathrm{\ is\ 
hol\-o\-mor\-phic.\
Because\ the\ real}\\
\mathrm{part\hs\ of\hs\ a\hs\ hol\-o\-mor\-phic\hs\ vector\hs\ field\hs\
is\hs\ an\ infinitesimal\ auto}\hyp\\
\mathrm{mor\-phism\nh\ of\nh\ the\nh\ complex\nh\ structure,\hskip-.2ptwe\nh\
have\nh\ our\nh\ proposition.}
\end{array}
\end{equation}
Proposition 1.2 of \cite{yamada}, cited from Kobayashi's book 
\cite{kobayashi}, 
amounts to the well-known
{\it har\-mon\-ic-flow condition\/} satisfied by Kil\-ling fields $\,v\,$ on 
pseu\-do\hs-Riem\-ann\-i\-an manifolds. 
Thus, 2.12 in (\ref{vrb}) should read 2.14, since Propositions 1.2 and 2.14
refer to the Ric\-ci tensor quite prominently, while 2.12 does not mention 
it at all; also, Proposition 2.14 contains, in its second
part, a hol\-o\-mor\-phic\-i\-ty conclusion.

In the ninth line of the proof of the second part of Proposition 2.14, it is
established -- correctly -- that, for every $\,(1,0)\,$ vector field $\,Y\nh$,
and $\,Z\,$ in (\ref{vrb}), $\,\nabla''\nnh Z$ is
$\,L\nh^2\nh$-or\-thog\-o\-nal to $\,\nabla''Y\nnh$. Then an attempt is
made to conclude that $\,\nabla''\nnh Z=0$, arguing by contradiction: 
if $\,\nabla''\nnh Z\ne0\,$ at some point $\,z_0\w$, one can -- again
correctly -- find $\,Y\hh$ having
$\,g(\nabla''\nnh Z,\nabla''Y)\ne0\,$ everywhere in some neighborhood
of $\,z_0\w$. As a next step, it is claimed that a contradiction arises:
cited {\it verbatim},
\begin{equation}\label{ctr}
\begin{array}{l}
\mathrm{By\ considering\ a\ cut}\hyp\mathrm{off\ function,\ we\ see\ that\
there\ exists}\\
\mathrm{a\ complex\ vector\ field\ }\hs Y\nnh\mathrm{\ such\
that\,}\int\hskip-4.2pt_{_M}\w g(\nabla''\nnh Z,\nnh\nabla''Y)\,dv\ne0.
\end{array}
\end{equation}
It is here that the argument seems incomplete: such a cut-off function
$\,\varphi\,$ equals $\,1\,$ on
some small ``open ball'' $\,B\,$ centered at $\,z_0\w$, and vanishes outside
a larger ``concentric ball'' $\,B'\nh$, and after the original choice
of $\,Y\hh$ has been replaced by $\,\varphi Y\nnh$, there is no way to control
the integral of $\,g(\nabla''\nnh Z,\nabla''\nh(\varphi Y))\,$ over
$\,B'\nh\smallsetminus B\,$ (while the integrals over $\,B\,$ and
$\,M\smallsetminus B'$ have fixed values). More precisely, the sum of the
three integrals must be zero, $\,\nabla''\nnh Z\,$ being 
$\,L\nh^2\nh$-or\-thog\-o\-nal to all $\,\nabla''Y\nnh$.

\end{document}